\numberwithin{equation}{section}
\newcommand{\beq}{\begin{equation}}
\newcommand{\eeq}{\end{equation}}
\newcommand{\bea}{\begin{eqnarray}}
\newcommand{\eea}{\end{eqnarray}}
\newcommand{\beas}{\begin{eqnarray*}}
\newcommand{\eeas}{\end{eqnarray*}}
\newtheorem{theorem}{Theorem}[section]
\newtheorem{proposition}[theorem]{Proposition}
\newtheorem{remark}[theorem]{Remark}
\newtheorem{example}[theorem]{Example}
\newtheorem{examples}[theorem]{Examples}
\newtheorem{foo}[theorem]{Remarks}
\newcommand{\bM}{\mathbb M}
\newcommand{\ve}{\varepsilon}
\title{Sub-Riemannian balls in CR Sasakian manifolds}
\author{Fabrice Baudoin}
\address{Department of Mathematics\\Purdue University \\
West Lafayette, IN 47907} \email[Fabrice Baudoin]{fbaudoin@math.purdue.edu}
\thanks{First author supported in part by
NSF Grant DMS 0907326}
\author{Michel Bonnefont}
\begin{document}

\maketitle

\begin{abstract}
We prove global estimates for the sub-Riemannian distance of CR Sasakian manifolds with non negative horizontal Webster-Tanaka Ricci curvature. In particular, in this setting,  large sub-Riemannian balls are comparable to Riemannian balls.
\end{abstract}

\tableofcontents

\section{Introduction}

 Let $\mathbb{M}$ be a complete strictly pseudo convex  CR Sasakian manifold with real dimension $2n +1$. Let $\theta$ be a pseudo-hermitian form on $\mathbb{M}$ with respect to which the Levi form $L_\theta$ is positive definite. The kernel of $\theta$ determines an horizontal bundle $\mathcal H$. Denote now  by $T$ the Reeb vector field on $\bM$, i.e., the characteristic direction of $\theta$. We denote by $\nabla$ the Tanaka-Webster connection of $\mathbb{M}$.

We recall that the \emph{CR} manifold $(\bM,\theta)$ is called Sasakian if the pseudo-hermitian torsion of $\nabla$ vanishes, in the sense that $\mathbf{T}(T,X) = 0$, for every $X\in \mathcal H$.
For instance the standard CR structures on the  Heisenberg group $\mathbb{H}_{2n+1}$ and the sphere $\mathbb{S}^{2n+1}$ are Sasakian. In every Sasakian manifold the Reeb vector field $T$ is a sub-Riemannian Killing vector field (see Theorem 1.5 on p. 42 and Lemma 1.5 on p. 43  in \cite{CR}).   

 We consider the family of scaled Riemannian metrics $g_\tau$, $\tau >0$, such that for $X,Y \in \mathcal{H}$:
\begin{align}\label{scaled}
g_\tau (X,Y)=d\theta (X,JY), \quad g_\tau (X,T)=0, \quad g_\tau (T,T) =\frac{1}{\tau^2}.
\end{align}
where $J$ is the complex structure on $\mathbb{M}$. We denote by $d_\tau$ the distance corresponding to the Riemannian structure $g_\tau$ and by $d$ the sub-Riemannian distance on $\mathbb{M}$. It is well known that $d_\tau(x,y) \to d(x,y)$, when $\tau \to 0$.
Our goal is to prove the following theorem:

\begin{theorem}\label{Global}
Let $\mathcal{R}$ be the Ricci curvature of the Webster-Tanaka connection $\nabla$. If for every $X \in \mathcal{H}$,
\[
\mathcal{R} (X,X) \ge 0,
\]
then for every $x,y \in \mathbb{M}$,
 \[
d_\tau(x,y) \le d(x,y) \le A_n d_\tau(x,y) +B_n \sqrt{\tau} d_\tau(x,y)^{1/2},
\]
where $A_n$ and $B_n$ are two positive universal constants depending only on $n$.
\end{theorem}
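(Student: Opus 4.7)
The inequality $d_\tau(x,y)\le d(x,y)$ is immediate and I would dispatch it first: any horizontal curve joining $x$ to $y$ lies in $\mathcal H$, where $g_\tau$ agrees with the sub-Riemannian metric for every $\tau$, so its $g_\tau$-length equals its sub-Riemannian length, and passing to the infimum over horizontal curves gives the bound.

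For the upper bound, my plan is to fix a minimizing $g_\tau$-geodesic $\gamma:[0,1]\to\mathbb{M}$ from $x$ to $y$ with constant $g_\tau$-speed $r:=d_\tau(x,y)$ and decompose $\dot\gamma=X_t+g_t T$ with $X_t\in\mathcal H$, so that $|X_t|^2+g_t^2/\tau^2=r^2$ and in particular $|g_t|\le \tau r$. The central idea is to exploit that the Reeb flow $\phi_s$ is a $g_\tau$-isometry that preserves $\mathcal H$: since $T$ is sub-Riemannian Killing, $g_\tau(T,T)$ is constant, and the Sasakian identity $\mathbf{T}(T,X)=0$ forces $[T,\mathcal H]\subset \mathcal H$, all three properties follow. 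Setting $G(t):=\int_0^t g_u\,du$ and $\sigma(t):=\phi_{-G(t)}(\gamma(t))$, a chain-rule computation using $d\phi_s(T)=T$ gives
\[
\dot\sigma(t)=d\phi_{-G(t)}(X_t),
\]
which is horizontal with $|\dot\sigma(t)|=|X_t|$. Thus $\sigma$ is a horizontal path from $x$ to $y_1:=\phi_{-G(1)}(y)$, and Cauchy-Schwarz yields
\[
d(x,y_1)\le \int_0^1 |X_t|\,dt \le \Big(\int_0^1 |X_t|^2\,dt\Big)^{1/2}\le r.
\]

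It remains to control $d(y_1,y)$, where $y_1$ and $y$ are joined by the Reeb flow at time $|G(1)|\le \tau r$. This is the heart of the proof and the point at which the non-negativity of the Webster-Tanaka horizontal Ricci tensor becomes essential. My aim is a universal estimate
\[
d(p,\phi_s p)\le C_n\sqrt{|s|}\qquad \text{for all } p\in\mathbb{M},\ s\in\mathbb R,
\]
extending the standard $\sqrt{|s|}$-scaling of vertical displacement on the Heisenberg group to arbitrary Sasakian manifolds with non-negative horizontal Ricci. I would derive it through the generalized curvature-dimension framework for the horizontal Laplacian in the Sasakian setting: the hypothesis $\mathcal R(X,X)\ge 0$ should produce a curvature-dimension inequality with universal constants, hence Li-Yau and Gaussian heat-kernel bounds, from which the desired $\sqrt{|s|}$-estimate follows by a short Harnack chain along the Reeb orbit of length $|s|$. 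Combined with the triangle inequality, this yields
\[
d(x,y)\le d(x,y_1)+d(y_1,y)\le r + C_n\sqrt{\tau r},
\]
which is the claim with $A_n=1$ and $B_n=C_n$. The main obstacle is precisely this Reeb-displacement bound: producing a sharp $\sqrt{|s|}$ scaling uniformly on $\mathbb{M}$ from only a one-sided Ricci hypothesis, with no structural assumption beyond Sasakian, is where the full strength of the CR geometry and the curvature-dimension machinery must be used.
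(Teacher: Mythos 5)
Your reduction of the problem is attractive and, as far as it goes, correct: the first inequality is right (and is dismissed just as quickly in the paper), the Reeb flow does preserve $\mathcal{H}$ (indeed $\mathcal{L}_T\theta=0$ for any Reeb field) and acts by sub-Riemannian isometries in the Sasakian case, and your computation $\dot\sigma(t)=d\phi_{-G(t)}(X_t)$ checks out, so $d(x,y_1)\le d_\tau(x,y)$ with $y_1=\phi_{-G(1)}(y)$ and $|G(1)|\le\tau\, d_\tau(x,y)$ is a genuine and clean geometric step that has no counterpart in the paper. But the argument then reduces everything to the estimate $d(p,\phi_s p)\le C_n\sqrt{|s|}$, which you correctly identify as the heart of the matter and then do not prove: ``should produce a curvature-dimension inequality \dots from which the desired estimate follows by a short Harnack chain'' is a statement of intent, not an argument. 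This is a genuine gap, and it is not a small one, because filling it requires essentially the entire analytic content of the paper: the Li--Yau inequality must control not only $\|\nabla^{\mathcal H}\ln p_t\|^2$ but also the vertical term $(T\ln p_t)^2$ (with its characteristic extra factor of $t$), this must be converted into a parabolic Harnack inequality whose exponential cost is measured in the Riemannian distance $d_\tau$ rather than $d$, and that Harnack inequality must be played against the two-sided sub-Riemannian Gaussian bounds with an optimization over $t$ (and, for your specific estimate, over $\tau$ as well). A naive pointwise integration of $|T\ln p_t|$ along the Reeb orbit does not work because the Li--Yau bound on $(T\ln p_t)^2$ contains the term $\Delta p_t/p_t$, which is not controlled pointwise; one must move in time as well as in space, which is exactly what the paper's Harnack machinery is built to do.

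For comparison, the paper does not decompose any geodesic at all: it proves the Harnack inequality $p(t/2,x,x)\le 2^{n+3}p(t,x,y)\exp\bigl(\bigl(1+\tfrac{3}{n}\bigr)\bigl(\tfrac{1}{2t}+\tfrac{3\ln 2\,\tau^2}{nt^2}\bigr)d_\tau(x,y)^2\bigr)$, combines it with the on-diagonal lower bound and the Gaussian upper bound (which carries $d(x,y)^2$) to get, for all $t>0$, an inequality of the form $\alpha+\beta/t+\gamma/t^2\ge 0$ relating $d$ and $d_\tau$, and then simply chooses $t=\tau\, d_\tau(x,y)$. Your route would, if completed, yield the nicer constant $A_n=1$ in front of $d_\tau(x,y)$, and the Reeb-displacement bound you need is in fact derivable from the paper's Theorem \ref{harnack} and Theorem \ref{gb} by taking $z=\phi_s(y)$, using $d_\tau(y,\phi_s y)\le|s|/\tau$, letting $\tau\to\infty$, and setting $t=|s|$ --- so the strategy is salvageable. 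But as written, the proposal outsources its only hard step to machinery it does not develop, so it does not constitute a proof.
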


To put things in perspective, estimates between the sub-Riemannian distance and Riemannian ones have been extensively studied in the litterature (see for instance \cite{FP},  \cite{Gromov2}, \cite{JSC}, \cite{NSW}, \cite{RS}). But in these cited works, such estimates are local in nature. To the knowledge of the authors Theorem \ref{Global} is  the first result that gives global and uniform estimates for a large class of sub-Riemannian metrics. It is consistent with the well known  Nagel-Stein-Wainger estimate \cite{NSW}, that implies at small scales $d(x,y) \le  C d_\tau (x,y)^{\frac{1}{2}}$ and shows that due to curvature effects at big scales we have  $d(x,y) \simeq d_\tau (x,y)$.

\section{Li-Yau and Harnack estimates for the heat kernel on Sasakian manifolds}

 \subsection{Curvature dimension inequalities  and heat kernel bounds}

We first recall some results that will be needed in the sequel and that can be found in \cite{BBG} and \cite{BG}.
We denote by $\Delta$ the sub-Laplacian on $\mathbb{M}$ and by $\nabla^\mathcal{H}$ the horizontal gradient. For smooth functions $f:\mathbb{M} \to \mathbb{R}$, set
\begin{equation}\label{gamma2}
\Gamma_{2}(f) = \frac{1}{2}\big[\Delta \| \nabla^\mathcal{H} f \|^2 - 2 \langle \nabla^\mathcal{H} f,
\nabla^\mathcal{H} \Delta f \rangle\big],
\end{equation}
and
\begin{equation}\label{gamma2Z}
\Gamma^T_{2}(f) = \frac{1}{2}\big[\Delta (Tf)^2 -2(Tf)(T\Delta f)\big].
\end{equation} 

The following result was obtained in \cite{BG} by means of a Bochner's type formula.

\begin{theorem}\label{T:sasakian}
Assume that  for every $X \in \mathcal{H}$,
\[
\mathcal{R} (X,X) \ge 0.
\]
Then for every $f\in C^\infty(\bM)$ and any $\nu>0$, 
 \[
 \Gamma_{2}(f)+\nu \Gamma^T_{2}(f) \ge \frac{1}{2n} (\Delta f)^2  -\frac{1}{\nu} \| \nabla^\mathcal{H} f \|^2
 +\frac{n}{2} (Tf)^2.
 \]
\end{theorem}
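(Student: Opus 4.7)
The plan is to exploit two structural facts specific to the Sasakian setting: first, the Reeb field $T$ is a sub-Riemannian Killing field, so that $[T,\Delta]=0$; second, the vanishing of the pseudo-hermitian torsion reduces a general CR Bochner identity to a clean form in which only the Webster-Tanaka Ricci appears, together with a single cross term coupling $\nabla^{\mathcal H} f$ and $\nabla^{\mathcal H} (Tf)$.

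The first step is to compute $\Gamma_2^T(f)$ directly. Since $T$ commutes with $\Delta$ one has $T\Delta f = \Delta(Tf)$, and the standard identity $\tfrac12 \Delta(u^2) = u\,\Delta u + \|\nabla^{\mathcal H} u\|^2$ applied to $u = Tf$ yields immediately
\[
\Gamma_2^T(f) \;=\; \|\nabla^{\mathcal H} Tf\|^2 .
\]

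The second step is the Bochner computation for $\Gamma_2(f)$. Working in a local horizontal orthonormal frame $\{e_1,\dots,e_{2n}\}$ adapted to $J$ and using the Tanaka-Webster connection, I would expand $\tfrac12 \Delta\|\nabla^{\mathcal H} f\|^2$ and commute covariant derivatives, exploiting that $\sum_i [e_i,Je_i]$ is proportional to $T$ and that the pseudo-hermitian torsion vanishes. The outcome, established in \cite{BG}, should have the schematic form
\[
\Gamma_2(f) \;=\; \|(\nabla^{\mathcal H})^2 f\|_{\mathrm{HS}}^2 \,+\, \mathcal R(\nabla^{\mathcal H} f,\nabla^{\mathcal H} f) \,+\, \tfrac{n}{2}(Tf)^2 \,-\, 2\,\langle J\nabla^{\mathcal H} f,\nabla^{\mathcal H} Tf\rangle.
\]

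The third step is a lower bound of $\Gamma_2(f)+\nu\,\Gamma_2^T(f)$ term by term. Cauchy-Schwarz on the trace of the horizontal Hessian gives $\|(\nabla^{\mathcal H})^2 f\|_{\mathrm{HS}}^2 \ge \tfrac{1}{2n}(\Delta f)^2$; the Ricci term is nonnegative by hypothesis; and Young's inequality bounds the cross term below by
\[
-2\,\langle J\nabla^{\mathcal H} f,\nabla^{\mathcal H} Tf\rangle \;\ge\; -\tfrac{1}{\nu}\|\nabla^{\mathcal H} f\|^2 \,-\, \nu\,\|\nabla^{\mathcal H} Tf\|^2 .
\]
The $\nu\,\|\nabla^{\mathcal H} Tf\|^2$ is absorbed exactly by $\nu\,\Gamma_2^T(f)$, producing the stated inequality. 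The only genuine obstacle is bookkeeping: one must verify that the Bochner identity yields precisely the coefficients $n/2$ on $(Tf)^2$ and $2$ on the cross term, which come from contracting $d\theta$ against $J$ on the adapted frame, and that no residual torsion-type term survives the Sasakian simplification. Everything after that is formal Cauchy-Schwarz and Young manipulation with the parameter $\nu$.
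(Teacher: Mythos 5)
Your proposal is correct in outline and follows essentially the same route as the paper, which gives no proof of this theorem at all but attributes it to a Bochner-type formula in \cite{BG}: your three steps --- computing $\Gamma_2^T(f)=\|\nabla^{\mathcal H} (Tf)\|^2$ from the Killing property of $T$ (so $[T,\Delta]=0$), invoking the Sasakian Bochner identity for $\Gamma_2(f)$, and then Cauchy--Schwarz on the trace of the horizontal Hessian plus Young's inequality with parameter $\nu$ to absorb the cross term into $\nu\,\Gamma_2^T(f)$ --- are precisely the argument of \cite{BG}. The single point you leave unverified (that the Sasakian Bochner identity produces exactly the coefficients $n/2$ and $2$, with the caveat that the $\tfrac{n}{2}(Tf)^2$ term is the squared antisymmetric part of the Tanaka--Webster horizontal Hessian and so must not be double-counted against $\|(\nabla^{\mathcal H})^2 f\|^2_{\mathrm{HS}}$) is the same computation the paper itself outsources to \cite{BG}.
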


We denote by $p(t,x,y)$ the heat kernel of $\mathbb{M}$, that is the fundamental solution of the heat equation $\frac{\partial f}{\partial t}=\Delta f$. The following global lower and upper bounds were proved in \cite{BBG}.

\begin{theorem}\label{gb}
Assume that  for every $X \in \mathcal{H}$,
\[
\mathcal{R} (X,X) \ge 0.
\]
For any $0<\ve \le 1$
there exists a constant $C(\ve) = C(n,\ve)>0$, which tends
to $\infty$ as $\ve \to 0^+$, such that for every $x,y\in \bM$
and $t>0$ one has
\[
\frac{C(\ve)^{-1}}{\mu(B(x,\sqrt
t))} \exp
\left(-\left( 1+\frac{3}{n} \right)\frac{ d(x,y)^2}{(4-\ve)t}\right)\le p(t,x,y)\le \frac{C(\ve)}{\mu(B(x,\sqrt
t))} \exp
\left(-\frac{d(x,y)^2}{(4+\ve)t}\right).
\]
\end{theorem}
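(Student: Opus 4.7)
My plan is to follow the Baudoin--Garofalo route from a generalized curvature-dimension inequality to Gaussian heat kernel bounds, using Theorem~\ref{T:sasakian} as the fundamental input and exploiting the Sasakian commutation $[\Delta, T] = 0$ (which holds because $T$ is a sub-Riemannian Killing field). The first step is a Li--Yau type gradient estimate. For a positive solution $u = P_t f$ of the heat equation, introduce the time-interpolation functionals
$$\Phi_1(s) = P_{t-s}\bigl( \|\nabla^\mathcal{H} \log P_s f\|^2 \cdot P_s f \bigr), \qquad \Phi_2(s) = P_{t-s}\bigl( (T \log P_s f)^2 \cdot P_s f \bigr).$$
Using $[\Delta, T] = 0$, their $s$-derivatives reduce to $2 P_{t-s}(\Gamma_{2}(\log P_s f) \cdot P_s f)$ and $2 P_{t-s}(\Gamma_{2}^T(\log P_s f) \cdot P_s f)$ respectively. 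Combining these with carefully chosen weights, applying Theorem~\ref{T:sasakian} with a time-dependent parameter $\nu = \nu(s)$ (linearly increasing in the spirit of \cite{BG}), and integrating over $s\in[0,t]$ yields a pointwise Li--Yau inequality of the form
$$\|\nabla^\mathcal{H} \log P_t f\|^2 + \alpha(t) (T \log P_t f)^2 \le (1+\ve)\,\frac{\Delta P_t f}{P_t f} + \frac{C(\ve, n)}{t}.$$

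The second step is a parabolic Harnack inequality. Regarding the Li--Yau estimate as a differential inequality for $\log P_t f$ in space-time, I integrate it along the path coupling a linear increase of time with a near-minimizing horizontal curve from $x$ to $y$. This produces
$$P_s f(x) \le P_t f(y) \Bigl( \tfrac{t}{s} \Bigr)^{C(\ve, n)} \exp\Bigl( \tfrac{1+\ve}{4} \cdot \tfrac{d(x,y)^2}{t-s} \Bigr), \qquad 0 < s < t,$$
the Gaussian exponent being inherited directly from the Li--Yau constant.

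The third step derives the two-sided bounds from Harnack. For the upper bound, apply Harnack to $p(\cdot, \cdot, y)$ and compare to the on-diagonal value $p(2t, y, y) \lesssim 1/\mu(B(y, \sqrt t))$, the latter following from Theorem~\ref{T:sasakian} via the associated reverse Poincar\'e / ultracontractivity argument together with the volume doubling that Theorem~\ref{T:sasakian} implies. For the lower bound, cover a near-minimizing horizontal path from $x$ to $y$ by $N \sim d(x,y)^2 / t$ balls of radius $\sqrt{t/N}$ and iterate Harnack along this chain, combining the result with the on-diagonal lower bound (itself a consequence of Harnack applied to $p(\cdot, \cdot, y)$ on a ball of radius $\sqrt t$).

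The main obstacle is tracking the sharp exponential constants, and in particular the asymmetric factor $1 + 3/n$ that appears in the lower Gaussian bound but not in the upper one. This asymmetry reflects the fact that the parameter $\nu$ in Theorem~\ref{T:sasakian} must be optimized differently in the Davies-type argument giving the upper bound than in the Harnack-chain argument giving the lower bound: in the former one can send $\nu\to\infty$ and kill the $-\frac{1}{\nu}\|\nabla^\mathcal{H} f\|^2$ term at the cost of losing the vertical information, whereas in the latter one must keep $\nu$ finite to propagate Harnack along the chain, and the cost of the resulting $-\frac{1}{\nu}\|\nabla^\mathcal{H} f\|^2$ term inflates the effective dimension from $2n$ to $2n+6$, producing exactly the $1+3/n$ prefactor. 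Executing this optimization carefully, to recover precisely $\frac{1}{4+\ve}$ and $\frac{1+3/n}{4-\ve}$ with $C(\ve)\to\infty$ as $\ve\to 0$, is the most technical part of the argument.
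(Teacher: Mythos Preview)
The paper does not give its own proof of this theorem; it is quoted as a known result from \cite{BBG} (``The following global lower and upper bounds were proved in \cite{BBG}''). Your outline is essentially the strategy of that reference and of \cite{BG}: derive a Li--Yau gradient estimate from the generalized curvature-dimension inequality of Theorem~\ref{T:sasakian}, integrate it along geodesics to obtain a parabolic Harnack inequality, and combine Harnack with volume doubling and on-diagonal estimates to get the two-sided Gaussian bounds. In fact the paper later reproduces exactly your steps~1 and~2 for other purposes (Proposition~\ref{Li-Yau} and Theorem~\ref{harnack}), so your plan is well aligned with the intended argument.

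One small correction worth noting: the Li--Yau inequality coming out of Theorem~\ref{T:sasakian} does not have an adjustable coefficient $1+\ve$ in front of $\Delta P_t f / P_t f$; the optimization of $\nu$ already at the level of the entropy functional forces the specific constant $1+3/n$ (compare Proposition~\ref{Li-Yau}). It is this fixed constant, passed straight through the Harnack integration, that produces the factor $1+3/n$ in the lower Gaussian exponent; the asymmetry with the upper bound arises because the upper bound is obtained via a separate integrated Davies-type argument rather than from the same Harnack chain, not because $\nu$ is re-optimized at the heat-kernel stage.
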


In the above Theorem, $d$ is the sub-Riemannian distance, $B(x,\sqrt
t)$ is the sub-Riemannian ball with center $x$ and radius $\sqrt{t}$ and $\mu$ is the volume corresponding to the volume form $\theta \wedge (d\theta)^n$.

\subsection{Harnack type estimates}

From now on and in all the sequel we assume that  for every $X \in \mathcal{H}$, $\mathcal{R} (X,X) \ge 0$.
We first have the following Li-Yau type estimate for the heat kernel.
\begin{proposition}\label{Li-Yau}
For $t>0$,
\begin{align*}
\| \nabla^\mathcal{H} \ln p_t \|^2 +\frac{n}{3}  t  (T \ln p_t )^2  \le
\left(1+\frac{3}{n}\right)
\frac{\Delta p_t }{p_t}  +\frac{n \left( 1+\frac{3}{n}
\right)^2}{t}.
\end{align*}
\end{proposition}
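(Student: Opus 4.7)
The plan is to adapt the parabolic Bochner / semigroup interpolation scheme of \cite{BG} to derive the Li--Yau estimate from the generalized curvature-dimension inequality of Theorem \ref{T:sasakian}. Fix $t>0$ and pick $f$ a smooth positive function on $\bM$ with adequate decay; set $u_s = P_s f$ and $\phi_s = \ln u_s$. The stated inequality for $p_t(\cdot,y)$ will follow by approximating $f$ by a Dirac mass at $y$, a step justified by hypoellipticity of $\Delta$ and the heat-kernel bounds of Theorem \ref{gb}. I use throughout that the Reeb field $T$ is sub-Riemannian Killing and hence commutes with $\Delta$ and with $P_s$, together with the shorthand $\Ga(g) = \N{\nabla^{\Ho} g}^2$ and $\Ga^T(g) = (Tg)^2$.

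The core construction is to introduce smooth nonnegative weights $a(s), b(s)$ on $[0,t]$ to be chosen later, and to consider the time-dependent functional
\[
\Psi(s) \;=\; P_s\!\left( a(s)\, \Ga(\phi_{t-s}) + b(s)\, \Ga^T(\phi_{t-s}) \right), \qquad s \in [0,t].
\]
A direct differentiation, using $\partial_s P_s = P_s \Delta$ and the identity $\partial_\sigma \phi_\sigma = \Delta \phi_\sigma + \Ga(\phi_\sigma)$ satisfied along the heat flow, expresses $\Psi'(s)$ as a weighted combination of $\Gamma_{2}$ and $\Gamma^T_{2}$ applied to $\phi_{t-s}$. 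Applying Theorem \ref{T:sasakian} pointwise to $\phi_{t-s}$ with parameter $\nu = b(s)/a(s)$ then yields a lower bound of the shape
\[
\Psi'(s) \ge P_s\!\Big( \big(a'(s) - \tfrac{2 a(s)^2}{b(s)}\big)\, \Ga(\phi_{t-s}) + \big(b'(s) + n\, a(s)\big)\, \Ga^T(\phi_{t-s}) + \tfrac{a(s)}{n}\,(\Delta \phi_{t-s})^2 \Big).
\]

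Next, I would select $a, b$ so that the $\Ga$ and $\Ga^T$ coefficients have a prescribed sign; this yields a coupled ODE system with a one-parameter freedom. Integrating the resulting inequality over $[0,t]$ with endpoint normalisations chosen to recover the LHS of the target estimate at $s=0$, substituting $\Delta \phi_\sigma = \Delta u_\sigma / u_\sigma - \Ga(\phi_\sigma)$, and exploiting Jensen's inequality $(P_s g)^2 \le P_s(g^2)$ converts the $(\Delta \phi_{t-s})^2$-contribution into a term controlled by $\Delta u_t / u_t$ plus a purely time-dependent remainder of order $1/t$. The main obstacle is the algebraic fine-tuning at this final stage: resolving the ODE system and verifying that the residual constants match exactly $(1 + 3/n)$ and $n(1 + 3/n)^2$ requires a careful two-parameter optimization. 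The remaining technical points, namely differentiability of $\Psi$ under $P_s$ and the Dirac-mass approximation, follow from standard arguments already in \cite{BBG}.
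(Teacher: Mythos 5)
Your overall strategy --- a semigroup interpolation functional with time-dependent weights $a(s),b(s)$, an application of Theorem \ref{T:sasakian} with $\nu=b(s)/a(s)$, and a final self-improvement of the $(\Delta \phi)^2$ term --- is exactly the one the paper uses (there with $a(s)=\tfrac{3}{n}(t-s)^2$ and $b(s)=(t-s)^3$, so that the functional vanishes at $s=t$ and returns the left-hand side at $s=0$). There is, however, a genuine gap at the central step: your functional
\[
\Psi(s)=P_s\Big(a(s)\,\|\nabla^{\mathcal H}\phi_{t-s}\|^2+b(s)\,(T\phi_{t-s})^2\Big)
\]
is the \emph{unweighted} one, and its derivative is \emph{not} a combination of $\Gamma_{2}$ and $\Gamma^T_{2}$. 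Since $\partial_\sigma\phi_\sigma=\Delta\phi_\sigma+\|\nabla^{\mathcal H}\phi_\sigma\|^2$, one finds
\[
\frac{d}{ds}P_s\big(\|\nabla^{\mathcal H}\phi_{t-s}\|^2\big)=2P_s\big(\Gamma_{2}(\phi_{t-s})\big)-2P_s\big(\langle\nabla^{\mathcal H}\phi_{t-s},\nabla^{\mathcal H}\|\nabla^{\mathcal H}\phi_{t-s}\|^2\rangle\big),
\]
and an analogous third-order term $-2P_s\big((T\phi_{t-s})\,T\|\nabla^{\mathcal H}\phi_{t-s}\|^2\big)$ appears in the $\Gamma^T$ part. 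These terms have no sign and are not controlled by Theorem \ref{T:sasakian}, so your displayed lower bound for $\Psi'(s)$ does not follow. The fix is to weight the functional by the solution itself, i.e.\ to work with $P_s\big(u_{t-s}\|\nabla^{\mathcal H}\ln u_{t-s}\|^2\big)=P_s\big(\|\nabla^{\mathcal H}u_{t-s}\|^2/u_{t-s}\big)$ and its $T$-analogue, as the paper does: for the weighted version the chain-rule terms cancel exactly (for the $T$-part one needs the Killing identity $\langle\nabla^{\mathcal H}f,\nabla^{\mathcal H}(Tf)^2\rangle=(Tf)\,T\|\nabla^{\mathcal H}f\|^2$ quoted in the paper), leaving only $2a\,u\,\Gamma_{2}(\ln u)+2b\,u\,\Gamma^T_{2}(\ln u)$ plus the $a'$, $b'$ terms.

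A secondary point: once the weight $u_{t-s}$ is in place, your appeal to Jensen would have to become a Cauchy--Schwarz of the form $P_s(ug)^2\le P_s(u)\,P_s(ug^2)$. The paper instead uses the more elementary pointwise linearization $(\Delta\ln u)^2\ge 2\gamma(s)\,\Delta\ln u-\gamma(s)^2$ with $\gamma(s)=-(n+3)/(t-s)$, which converts $P_s\big(u\,(\Delta\ln u)^2\big)$ directly into a multiple of $\Delta u_t$ (via $P_s\Delta u_{t-s}=\Delta u_t$) plus explicit lower-order terms, and pins down the constants $\big(1+\tfrac{3}{n}\big)$ and $n\big(1+\tfrac{3}{n}\big)^2$ without the two-parameter optimization you defer. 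Your remarks on regularity and the Dirac-mass approximation are unproblematic.
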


\begin{proof}
The result is essentially proved in \cite{BG}, but due to the simplicity of the argument we reproduce, without the details, the proof by sake of completeness. Fix $T>0$ and consider the functional
\[
\Phi(t)=\frac{3}{n} (T-t)^2 P_t \left( \frac{\| \nabla^\mathcal{H} p_{T-t} \|^2}{p_{T-t}}\right) +(T-t)^3  P_t \left( \frac{ (Tp_{T-t})^2}{p_{T-t}}\right),
\]
where $P_t$ is the heat semigroup associated with $\Delta$. Since $T$ is a Killing vector field, for any smooth function $f$ we have
\[
\langle \nabla^\mathcal{H} f,  \nabla^\mathcal{H} (Tf)^2  \rangle= (Tf) (T  \|  \nabla^\mathcal{H} f \|^2).
\]
Differentiating $\Phi$  and using the above yields 
\begin{align*}
\Phi'(t)&=  \frac{6}{n} (T-t)^2 P_t \left(p_{T-t}\Gamma_2( \ln p_{T-t}) \right) +2(T-t)^3  P_t \left(p_{T-t} \Gamma_2^T(\ln p_{T-t})  \right) \\ 
 &-\frac{6}{n} (T-t) P_t \left( \frac{\| \nabla^\mathcal{H} p_{T-t} \|^2}{p_{T-t}}\right) -3(T-t)^2  P_t \left( \frac{ (Tp_{T-t})^2}{p_{T-t}}\right).
\end{align*}
From Theorem \ref{T:sasakian}, we have
\begin{align*}
 & \frac{6}{n} (T-t)^2 p_{T-t}\Gamma_2( \ln p_{T-t}) +2(T-t)^3 p_{T-t} \Gamma_2^T(\ln p_{T-t}) \\
 \ge & \frac{3}{n^2} (T-t)^2 p_{T-t} (\Delta \ln p_{T-t})^2-\frac{18}{n^2} (T-t) p_{T-t} \|  \nabla^\mathcal{H} \ln p_{T-t} \|^2 +3(T-t)^2 p_{T-t} (T \ln p_{T-t})^2.
\end{align*}
Therefore we obtain
\begin{align*}
\Phi'(t)& \ge \frac{3}{n^2} (T-t)^2P_t( p_{T-t} (\Delta \ln p_{T-t})^2)-\left(\frac{18}{n^2} +\frac{6}{n}\right) (T-t) P_t(p_{T-t} \|  \nabla^\mathcal{H} \ln p_{T-t} \|^2 ).
\end{align*}
Now, for every $\gamma(t)$, we have
\begin{align*}
(\Delta (\ln p_{T-t}) )^2 & \ge 2\gamma(t) \Delta \ln p_{T-t} -\gamma(t)^2  \\
 & \ge 2\gamma(t) \left( \frac{\Delta p_{T-t}}{p_{T-t}} - \| \nabla^\mathcal{H} \ln p_{T-t} \|^2\right) -\gamma(t)^2.
\end{align*}
Therefore we get
\begin{align*}
\frac{3}{n^2} (T-t)^2P_t( p_{T-t} (\Delta \ln p_{T-t})^2)  \ge  \frac{6}{n^2} (T-t)^2\gamma(t) \left( \Delta p_{T} -P_t\left(  p_{T-t} \| \nabla^\mathcal{H} \ln p_{T-t} \|^2\right) \right) -\frac{3}{n^2} (T-t)^2\gamma(t)^2p_T.
\end{align*}
This implies
\[
\Phi'(t) \ge  \frac{6}{n^2} (T-t)^2\gamma(t)  \Delta p_{T} -\frac{3}{n^2} (T-t)^2\gamma(t)^2p_T-\left(\frac{18}{n^2} +\frac{6}{n}+\frac{6}{n^2}(T-t)\gamma(t) \right) (T-t) P_t(p_{T-t} \|  \nabla^\mathcal{H} \ln p_{T-t} \|^2 ).
\]
Setting $\gamma(t)=-\frac{n+3}{T-t}$, leads then to
\[
\Phi'(t) \ge  - \frac{6(n+3)}{n^2}(T-t) \Delta p_{T}-\frac{3}{n^2}(n+3)^2.
\]
By integrating the last inequality from 0 to $T$, we obtain
\[
-\Phi(0) \ge  - \frac{3(n+3)}{n^2}T^2 \Delta p_{T}-\frac{3}{n^2}(n+3)^2 T,
\]
which is the required inequality.
\end{proof}

We can deduce from the previous Li-Yau type inequality the following Harnack inequality.

\begin{theorem} \label{harnack} For $x,y,z \in \mathbb{M}$, $s<t$,
\begin{equation*}
p(s,x,y) \le  p(t,x,z) \left(\frac{t}{s}\right)^{n+3}  \exp\left( \left( 1+\frac{3}{n}\right) \left( \frac{1}{4(t-s)} +\frac{  \frac{3}{n}\tau^2 \ln\frac{t}{s}} {4(t-s)^2} \right) d_\tau(x,y)^2\right), \quad s<t,
\end{equation*}
where $d_\tau$ denotes the Riemannian metric introduced in (\ref{scaled}).
\end{theorem}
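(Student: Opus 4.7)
The plan is to apply the Li--Yau inequality of Proposition \ref{Li-Yau} to $f(\tau_1, u) := \ln p(\tau_1, u, w)$ for a fixed $w \in \bM$, then integrate along a Riemannian (rather than sub-Riemannian) path. Concretely, let $\gamma : [s,t] \to \bM$ be a constant-$g_\tau$-speed minimizing Riemannian geodesic connecting the two spatial points to be compared, so that $\|\gamma'(\tau_1)\|_\tau \equiv d_\tau / (t-s)$. Decomposing $\gamma'(\tau_1) = \gamma'_H + \gamma'_V \, T$ with $\gamma'_H \in \mathcal{H}$, and using that $\nabla^{g_\tau} f = \nabla^{\mathcal H} f + \tau^2 (Tf)\,T$ (immediate from (\ref{scaled})), one obtains
\[
\frac{d}{d\tau_1} f(\tau_1, \gamma(\tau_1)) = \partial_{\tau_1} f + \langle \gamma'_H, \nabla^{\mathcal H} f\rangle + \gamma'_V \, (Tf).
\]

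I would then apply Young's inequality to the two cross-terms with weights $\alpha > 0$ and $\beta := 3\alpha/(n\tau_1)$. This ratio is chosen precisely so that the $\|\nabla^{\mathcal H} f\|^2$ and $(Tf)^2$ terms that appear on the bounding side land in the combination $\|\nabla^{\mathcal H} f\|^2 + (n\tau_1/3)(Tf)^2$ controlled by Proposition \ref{Li-Yau}. The choice $\alpha = (n+3)/(2n)$ additionally cancels the positive $\partial_{\tau_1} f$ contribution from Li--Yau against the one coming from the chain rule, leaving
\[
\frac{d}{d\tau_1} f(\tau_1, \gamma(\tau_1)) \ge -\frac{n+3}{\tau_1} - \frac{n+3}{4n}\,\|\gamma'_H\|^2 - \frac{3(n+3)}{4n^2 \tau_1}\,(\gamma'_V)^2.
\]
Integrating this from $s$ to $t$, the $\tau_1^{-1}$ term yields $-(n+3)\ln(t/s)$, which exponentiates to the prefactor $(t/s)^{n+3}$. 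For the remaining two integrals, I would use the pointwise bounds $\|\gamma'_H\|^2 \le \|\gamma'\|_\tau^2$ and $(\gamma'_V)^2 \le \tau^2 \|\gamma'\|_\tau^2$, together with the constant-speed identities $\int_s^t \|\gamma'\|_\tau^2\, d\tau_1 = d_\tau^2/(t-s)$ and $\int_s^t \|\gamma'\|_\tau^2 / \tau_1\, d\tau_1 = d_\tau^2 \ln(t/s)/(t-s)^2$. Assembling these reproduces the exponent $\left(1+\frac{3}{n}\right)\!\left[\frac{1}{4(t-s)} + \frac{(3/n)\tau^2 \ln(t/s)}{4(t-s)^2}\right] d_\tau^2$ exactly.

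The main delicate step is the Young weight matching: $\beta$ must be tied to $\tau_1$, not to $\tau$, in order to exploit the $\tau_1$-dependent vertical term in Proposition \ref{Li-Yau}, and it is precisely this $\tau_1$-dependence that generates the $\ln(t/s)$ factor after integration against $d\tau_1$. The decoupled estimates on $\|\gamma'_H\|^2$ and $(\gamma'_V)^2$ are not simultaneously sharp along the geodesic, but since only an upper bound on the Harnack right-hand side is needed, no joint optimization of the $\mathcal H$/$T$ decomposition of $\gamma'$ is required.
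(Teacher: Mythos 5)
Your proof is correct and follows essentially the same route as the paper: integrate the Li--Yau estimate of Proposition \ref{Li-Yau} along a $g_\tau$-geodesic, absorb the gradient terms by Young's inequality with a time-dependent weight on the vertical part (which is exactly what produces the $\ln(t/s)$ factor), and cancel the $\Delta p/p$ term against $\partial_u \ln p_u$; your final differential inequality and all constants check out and reproduce the stated exponent. The only organizational difference is that the paper first merges the two consequences of Proposition \ref{Li-Yau} into a single bound on the full $g_\tau$-gradient with scalar weight $a(u)=\left(1+\frac{3\tau^2}{nu}\right)\left(1+\frac{3}{n}\right)$, optimizes exactly over paths, and then applies Cauchy--Schwarz in time, whereas you keep the horizontal/vertical decomposition of $\gamma'$ with anisotropic weights and bound each component of the speed separately --- both bookkeepings land on identical constants.
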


\begin{proof}

From Proposition \ref{Li-Yau},
\[
\| \nabla^\mathcal{H} \ln p_t \|^2  \le
\left(1+\frac{3}{n}\right)
\frac{\Delta p_t }{p_t}  +\frac{n \left( 1+\frac{3}{n}
\right)^2}{t}.
\]
and 
\[
\frac{n}{3}  t  (T \ln p_t )^2  \le
\left(1+\frac{3}{n}\right)
\frac{\Delta p_t }{p_t}  +\frac{n \left( 1+\frac{3}{n}
\right)^2}{t}.
\]

Therefore we have that for every $\tau >0$,
\begin{align}\label{hj}
\| \nabla^\mathcal{H} \ln p_t \|^2+ \tau^2 (T \ln p_t )^2 \le \left(1+\frac{3\tau^2}{n t}\right) \left(1+\frac{3}{n}\right) \frac{\Delta p_t}{p_t} + \frac{n \left( 1+\frac{3}{n}
\right)^2}{t}\left( 1+ \frac{3\tau^2}{n t}\right),
\end{align}

Let now  $x,y,z \in \bM$ and let $\gamma:[s,t] \to \bM$, $s<t$, be an absolutely continuous path such that $\gamma(s)=y, \gamma(t)=z$.
We first write (\ref{hj}) in the form
\begin{align}\label{LIYAU2}
g_\tau( \nabla^\tau \ln p_u, \nabla^\tau \ln p_u ) \le a(u) \frac{  \Delta p_u}{p_u} +b(u),
 \end{align}
 where 
 \[
 g_\tau( \nabla^\tau \ln p_u, \nabla^\tau \ln p_u ) =\| \nabla^\mathcal{H} \ln p_t \|^2+ \tau^2 (T \ln p_t )^2 
 \]
 and
 \[
 a(u)=\left(1+\frac{3\tau^2}{n u}\right) \left(1+\frac{3}{n}\right), 
 \]
 \[
 b(u)=  \frac{n \left( 1+\frac{3}{n}
\right)^2}{u}\left( 1+ \frac{3\tau^2}{n u}\right).
 \]
 Let us now consider
 \[
 \phi(u)=\ln p_u(x,\gamma(t)).
 \]
 We  compute
 \[
  \phi'(u)= ( \partial_u \ln p_u  (x,\gamma(u))+g_\tau( \nabla^\tau \ln p_u  (x, \gamma(u)),\gamma'(u) ).
 \]
 Now, for every $\lambda >0$, we have
 \[
g_\tau( \nabla^\tau \ln p_u  (x, \gamma(u)),\gamma'(u) )\ge -\frac{1}{2\lambda^2}  g_\tau( \nabla^\tau \ln p_u, \nabla^\tau \ln p_u ) -\frac{\lambda^2}{2} g_\tau(\gamma'(u),\gamma'(u)).
 \]
 Choosing  $\lambda=\sqrt{\frac{a(u)}{2} }$ and using then (\ref{LIYAU2}) yields
 \[
 \phi'(u) \ge -\frac{b(u)}{a(u)} -\frac{1}{4} a(u)g_\tau(\gamma'(u),\gamma'(u)).
 \]
 By integrating this inequality from $s$ to $t$ we get as a result.
 \[
 \ln p(t,x,y)-\ln p(s,x,z)\ge -\int_s^t \frac{b(u)}{a(u)} du  -\frac{1}{4} \int_s^t a(u) g_\tau(\gamma'(u),\gamma'(u)) du.
 \]
We now minimize the quantity  $\int_s^t a(u) \| \gamma'(u) \|^2 du$ over the set of absolutely continuous paths such that $\gamma(s)=y, \gamma(t)=z$. By using reparametrization of paths, it is seen that
 \[
 \int_s^t a(u) \| \gamma'(u) \|^2 du \ge \frac{d^2(x,y)}{\int_s^t \frac{dv}{a(v)}},
 \]
 with equality achieved for $\gamma(u)=\sigma\left( \frac{\int_s^u \frac{dv}{a(v)}}{\int_s^t \frac{dv}{a(v)}} \right)$ where $\sigma:[0,1] \to \bM$ is a unit geodesic joining $y$ and $z$. As a conclusion we get
 \[
p(s,x,y) \le \exp\left( \int_s^t \frac{b(u)}{a(u)} du + \frac{d_\tau^2(y,z)}{4\int_s^t \frac{dv}{a(v)}}  \right) p(t,x,z).
 \]
 Finally, from Cauchy-Schwarz inequality, we have
 \[
 \int_s^t \frac{dv}{a(v)} \ge \frac{(t-s)^2}{\int_s^t a(v)dv}
 \]
 and thus
 \[
p(s,x,y) \le \exp\left( \int_s^t \frac{b(u)}{a(u)} du + \frac{d_\tau^2(y,z)\int_s^t a(v)dv}{4(t-s)^2}  \right) p(t,x,z).
 \]
 
\end{proof}

\section{Uniform distance estimates}

We are now ready to prove Theorem \ref{Global}:

\begin{proof}
The inequality $d_\tau(x,y) \le d(x,y)$ is straightforward. We prove now the second inequality. From Theorem \ref{harnack} and Theorem \ref{gb},
 \begin{align*}
p(t,x,y)  & \ge \frac{1}{2^{n+3}} p(t/2,x,x) \exp\left(- \left( 1+\frac{3}{n}\right) \left( \frac{1}{2t} +\frac{  3 \ln 2\tau^2 } {nt^2} \right) d_\tau(x,y)^2\right) \\
           &\ge\frac{1}{2^{n+3}} \frac{C_0(n) }{\mu(\mathbf{B}(x,\sqrt{t})) } \exp\left(- \left( 1+\frac{3}{n}\right) \left( \frac{1}{2t} +\frac{  3 \ln 2\tau^2 } {nt^2} \right) d_\tau(x,y)^2\right)\\
\end{align*}

From the Gaussian upper bound of  Theorem \ref{gb} and the previous lower bound, we deduce that for all $t>0$,
$$
\ln \frac{2^{n+3}C(\ve)}{C_0(n)} +  \left( \frac{1}{2}\left( 1+\frac{3}{n}\right)  d_\tau(x,y)^2 -\frac{d(x,y)^2}{4+\ve}\right)\frac{1}{t} +\left( 1+\frac{3}{n}\right) \left(\frac{3 \ln 2}{n}\right)\tau^2 d_\tau^2(x,y) \frac{1}{t^2}\geq 0
$$
We now chose $t=\tau d_\tau(x,y)$ and obtain
\[
d(x,y)^2 \le (4+\ve) \left( \ln \frac{2^{n+3}C(\ve)}{C_0(n)} + \left( 1+\frac{3}{n}\right) \left(\frac{3 \ln 2}{n}\right) \right) \tau d_\tau (x,y)+\left( 2+\frac{\ve}{2} \right) \left( 1+\frac{3}{n}\right) d_\tau(x,y)^2.
\]
\end{proof}

\end{document}